\theoremstyle{plain}
\newtheorem{theorem}{Theorem}[section]
\newtheorem{proposition}[theorem]{Proposition}
\newtheorem{lemma}[theorem]{Lemma}
\newtheorem{corollary}[theorem]{Corollary}
\newtheorem{example}[theorem]{Example}
\newtheorem{remark}[theorem]{Remark}
\numberwithin{equation}{section}
\newcommand{\bN}{\mathbb{N}}
\newcommand{\bR}{\mathbb{R}}
\newcommand{\bZ}{\mathbb{Z}}
\newcommand{\suchthat}{\;|\;}
\newcommand{\p}[1]{\mathcal #1} 
\definecolor{emphcol}{rgb}{0.0, 0.5, 2.0}
\newcommand{\bemph}[1]{\textcolor{blue}{\emph{#1}}} 
\DeclareMathOperator{\GCD}{GCD}
\DeclareMathOperator{\md}{mod}
\newcommand{\shift}{\mathsf{shift}} 
\newcommand{\sort}{\mathsf{sort}} 
\newcommand{\breakd}{\mathsf{Break}} 
\newcommand{\park}{\mathsf{Park}} 
\begin{document}

\title[]{Divisors on complete multigraphs
and Donaldson-Thomas invariants of loop quivers}
\author{Matja\v z Konvalinka}
\address{Department of Mathematics, University of Ljubljana \& Institute of Mathematics, Physics and Mechanics, Ljubljana, Slovenia}
\thanks{The first author acknowledges the financial support from the Slovenian Research Agency (research core funding No. P1-0294).}
\email{\href{mailto:matjaz.konvalinka@fmf.uni-lj.si}{matjaz.konvalinka@fmf.uni-lj.si}}
\author{Markus Reineke}
\address{Faculty of Mathematics, Ruhr University Bochum, Bochum, Germany}
\email{\href{mailto:markus.reineke@rub.de}{markus.reineke@rub.de}}
\author{Vasu Tewari}
\address{Department of Mathematics, University of Hawaii at Manoa, Honolulu, HI 96822, USA}
\email{\href{mailto:vvtewari@math.hawaii.edu}{vvtewari@math.hawaii.edu}}

\begin{abstract}
We study the action of $S_n$ on the set of break divisors on complete multigraphs $K_{n}^m$.
We provide an alternative characterization for these divisors, by virtue of which we show that orbits of this action are enumerated by  the numerical Donaldson-Thomas invariants of $(m+1)$-loop quivers. Our characterization also allows us to restrict this action to $S_{n-1}$ and we identify the resulting $S_{n-1}$-module as that afforded by $K_n^m$-parking functions.
\end{abstract}

\maketitle

\section{Introduction}

Fix positive integers $m$ and $n$.
In \cite{KT21,KST21}, the first and third author introduced a family of $S_{mn}$ modules of cardinality $n^{mn-2}$ with the property that in the case $m=1$, the resulting representation restricted to $S_{n-1}$ is Haiman's well-known parking function representation \cite{Hai94}.
The primary motivation for introducing these modules was to gain a deeper understanding of the work of Berget-Rhoades \cite{BR14}, which studies certain $S_n$-modules with dimension $n^{n-2}$ that also restrict to the parking function representation.
In fact, Berget and Rhoades have a more general family of $S_n$-modules with dimension $m^{n-1}n^{n-2}$, which have the property that they carry actions of both $S_{n}$ and $S_{n-1}$. An explicit decomposition into irreducibles for these more general modules is determined for the $S_{n-1}$-action (but not the $S_{n}$-action) in \cite[Theorem 7]{BR14}.
We remark here that, unlike in the $m=1$ case \cite[Theorem 2]{BR14}, the case of general $m$  does not explicitly mention any analogue of parking functions.

Fix $G=K_n^m$, the complete multigraph on $n$ vertices with exactly $m$ edges between any two distinct vertices.
Denote the set of break divisors on $G$ by $\breakd_{m,n}$, and the set of $G$-parking functions (essentially $q$-reduced divisors for some distinguished vertex $q$) by $\park_{m,n}$.
The former naturally carries an $S_n$ action while the latter carries an $S_{n-1}$ action.
The aim of this note, achieved in Theorem~\ref{thm:frob_permutahedron}, is two-fold.

First we `amend' the modules in \cite{KST21} so that we obtain $S_n$-modules $\widehat{\p D}_{m,n}$ of dimension $m^{n-1}n^{n-2}$. These modules are then shown to be $S_n$-isomorphic to the module determined by $\breakd_{m,n}$, and furthermore allow us to show that their restriction to  $S_{n-1}$ is isomorphic to the $S_{n-1}$-action on $\park_{m,n}$. This generalizes our main result in \cite{KST21}.

Second, by exploiting the isomorphism $\widehat{\p D}_{m,n}\cong_{S_n} \breakd_{m,n}$, we show that the number of $S_n$-orbits on $\breakd_{m,n}$ equals the \emph{(unquantized) Donaldson-Thomas invariants} $\mathrm{DT}_n^{m+1}$ of the $(m+1)$-loop quiver; see \cite{Rei12} for more on combinatorial and other aspects. Very briefly, Donaldson-Thomas invariants of quivers (with potential) were introduced in \cite{KS11} as a mathematical definition of the string-theoretic concept of BPS state count; they are defined formally via Euler product factorizations of motivic generating series. Realizing the latter as Poincar\'e series of so-called Cohomological Hall algebras, integrality and positivity of Donaldson-Thomas invariants of symmetric quivers were established in \cite{Efi12}.

In the particular example of the $(m+1)$-loop quiver (and zero potential), the Donaldson-Thomas invariants ${\rm DT}_n^{m+1}$ can be defined concisely by factoring the generating series of $(m+1)$-ary trees with $n$ nodes
$$F(t)=\sum_{n\geq 0}\frac{1}{mn+1}{{(m+1)n}\choose{n}}t^n$$
into a (signed) Euler product:
$$F(t)=\prod_{n\geq 1}(1-((-1)^{m}t)^n)^{-(-1)^{mn}n{\rm DT}_n^{m+1}}.$$
Thus, from our main result, we obtain another combinatorial proof of the integrality of these numbers, and it is worthwhile to compare it with the earlier interpretation obtained by the second author.

In \cite[\S 6]{Rei12}, the natural cyclic action of $\mathbb{Z}_n\coloneqq \mathbb{Z}/n\mathbb{Z}$ on lattice points of the $mn$-fold dilation of the standard simplex in $\mathbb{R}^n$ is used to obtain a combinatorial interpretation for the $\mathrm{DT}_n^{m+1}$. These numbers, in fact quantized analogues thereof, are shown to count \emph{primitive/nearly-primitive elements} under this cyclic action, and the parity of $m$ plays a role.
In contrast, we utilize an $S_n\times \mathbb{Z}_n$ action on lattice points in a disjoint union of certain slices of the cube $[0,mn-1]^n$.
Identifying lattice points that are in the same $\mathbb{Z}_n$ class then gives an $S_n$-module $\widehat{\p D}_{m,n}$. In fact, each such class contains a unique lattice point belonging to a usual permutahedron closely related to the $m$-fold dilation of the standard permutahedron in $\bR^n$.
Finally we note that (part of) Theorem~\ref{thm:frob_permutahedron} may be interpreted as saying that the invariants $\mathrm{DT}^{m+1}_n$ for $(m+1)$-loop quivers equal the dimension of the space of $S_n$-invariants of $\widehat{\p D}_{m,n}$.   
In a similar vein (and in the general setting of symmetric quivers), Efimov \cite{Efi12} interprets the quantized DT-invariants as dimensions of spaces of $S_n$-invariants in certain quotients, but his work does not offer an explicit combinatorial perspective. He raises the question of exploring the underlying combinatorial aspects in \cite[\S 4]{Efi12}. The results in this article suggest looking for a graded analogue of $\widehat{\p D}_{m,n}$ that is $S_n$-isomorphic to Efimov's modules, thereby providing a tantalizing link to Cohomological Hall algebras. This is work in progress.

\section{Break divisors, $q$-reduced divisors, and symmetric group actions}
\label{sec:setup}
We fix positive integers $m$ and $n$ throughout.
By $[n]$ we mean $\{1,\dots,n\}$.
For all undefined terminology in the context of symmetric functions and symmetric group representations, we refer the reader to \cite[Chapter 7]{St99}. Throughout, given a $G$-set $X$ we refer to both the set and the corresponding $\mathbb{C}G$-module by $X$. We denote by $\mathrm{Frob}$ the \bemph{Frobenius characteristic} map assigning to the irreducible Specht module $V^{\lambda}$ indexed by a partition $\lambda$ the Schur function $s_{\lambda}$. We can extend $\mathrm{Frob}$ linearly and  compute the image of any $S_n$-module $V$ by  decomposing it into irreducibles.

\subsection{Break divisors on connected graphs}
\label{subsec:break divisors}
Given a finite graph $G$ (with multiple edges between the same vertices allowed), we denote its sets of vertices and edges by $V(G)$ and $E(G)$ respectively.
The \bemph{genus} $g(G)$ of a connected graph $G$ is defined to be $|E(G)|-|V(G)|+1$.

We now briefly recall some notions from Baker-Norine's theory \cite{BN07}.
A map $D:V(G)\to \bZ$ is called a \bemph{divisor}.  We say that $D$ is \bemph{effective} if $D(v) \geq 0$ for all $v\in V(G)$.  The \bemph{degree} $\deg(D)$ of $D$ equals $\sum_{v\in V(G)}D(v)$.
We write divisors either as tuples, say after identifying $V(G)$ with the set $[|V(G)|]$, or as formal sums $D = \sum_{v \in V(G)} D(v) (v)$.

For any orientation $\p O$ of the edges of $G$, define the divisor $D_{\p O}$ by
\begin{align}
D_{\p O}=\sum_{v\in V(G)}(\mathrm{indeg}_{\p O}(v)-1)(v).
\end{align}
Such divisors are called \bemph{orientable}.
Given $q\in V(G)$, we say that $\p O$ is $q$-connected if there exists a directed path from $q$ to any other vertex in $G$.
A \bemph{$q$-orientable divisor} is a divisor of the form $D_{\p O}$ where $\p O$ is $q$-connected.
A \bemph{break divisor} \cite{MZ08,ABKS14} on $G$ is an effective divisor $D$ of degree $g(G)$ such that  for all induced subgraphs $H$ of $G$ the following holds:
\begin{align}
\label{ineq:condition for break}
	\deg(D|_{H})\geq g(H).
\end{align}
Here $\deg(D|_{H})$ denotes the degree of $D$ restricted to vertices in $H$.
We denote the set of break divisors on $G$ by $\mathrm{Break}(G)$.

We record a result next that we have not been able to locate in the literature, though undoubtedly it should be well known to experts.
 Let $e_1,\dots,e_n$ denote the standard basis vectors in $\bR^n$.
  Let $G$ be a connected multigraph with $V(G)=[n]$.
 Then $G$ determines a zonotope $\p Z_G$ called the \bemph{graphical zonotope} obtained  by taking the Minkowski sum of line segments $[e_i,e_j]$, one for each edge $\{i,j\}\in E(G)$.
 Suppose that $\Delta_{n-1,n}$ denotes the $(n-1)$th standard hypersimplex in $\bR^n$ obtained by taking the convex hull of the $S_n$ orbit of the point $(1^{n-1},0)$.
 We define the  Minkowski difference $P-Q$ of polytopes $P,Q\subset \bR^n$ to be $\{x\in \bR^n\suchthat x+q\in P\text{ for all }q\in Q\}$.
 We then have the following result.
 \begin{proposition}
 \label{prop:break and  lattice points}
 For any connected multigraph $G$ with $V(G)=\{1,\dots,n\}$ we have
\[
\mathrm{Break}(G)=(\p Z_G-\Delta_{n-1,n})\cap \bZ^n.
\]
 \end{proposition}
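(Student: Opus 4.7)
The plan is to reduce both sides of the claimed equality to statements about orientations of $G$, and then match the resulting combinatorial descriptions.  First, since $\Delta_{n-1,n}$ is a polytope, the Minkowski difference is controlled by its vertices: a lattice point $D\in\bZ^n$ belongs to $\mathcal{Z}_G-\Delta_{n-1,n}$ if and only if $D+\mathbf{1}-e_v\in\mathcal{Z}_G$ for every vertex $\mathbf{1}-e_v$ of $\Delta_{n-1,n}$, where $v$ ranges over $[n]$.  Second, lattice points in the graphical zonotope are exactly indegree sequences of orientations,
\[\mathcal{Z}_G\cap\bZ^n=\{(\mathrm{indeg}_{\mathcal{O}}(u))_{u\in V(G)}:\mathcal{O}\text{ is an orientation of }G\},\]
which follows from expressing $\mathcal{Z}_G=\sum_{e=\{i,j\}\in E(G)}[e_i,e_j]$ and choosing an endpoint for each segment.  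Combining these, $D\in(\mathcal{Z}_G-\Delta_{n-1,n})\cap\bZ^n$ iff for every $v\in V(G)$ there exists an orientation $\mathcal{O}_v$ of $G$ with $D=D_{\mathcal{O}_v}+(v)$.

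For the forward implication, setting $v=u$ in the orientation condition gives $D(u)=\mathrm{indeg}_{\mathcal{O}_u}(u)\geq 0$, so $D$ is effective, and summing the defining equation recovers $\deg(D)=|E(G)|-n+1=g(G)$.  For the subgraph inequality on $H=G[S]$, pick any $v\in S$ and write
\[\deg(D|_H)=1+\sum_{u\in S}\bigl(\mathrm{indeg}_{\mathcal{O}_v}(u)-1\bigr)\geq |E(H)|-|S|+1=g(H)\]
(for connected $H$; the disconnected case follows by summing over components), using that $\sum_{u\in S}\mathrm{indeg}_{\mathcal{O}_v}(u)$ counts edges of $G$ with head in $S$ and in particular all edges of $H$.

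For the reverse implication, given a break divisor $D$ and a fixed $v\in V(G)$, we must realize the indegree sequence $d(u)=D(u)+1-\delta_{uv}$ by some orientation of $G$.  Here I invoke the classical Hakimi-type realization theorem: such an orientation exists iff $\sum_u d(u)=|E(G)|$ and $\sum_{u\in S}d(u)\geq |E(G[S])|$ for every $S\subseteq V(G)$, with the matching upper bounds being automatic by complementing.  The degree equality is immediate from $\deg(D)=g(G)$.  When $v\in S$ the required inequality unfolds to $\deg(D|_S)\geq |E(G[S])|-|S|+1$, which is the break inequality applied to $G[S]$ (or to each connected component separately).  When $v\notin S$ it is the weaker $\deg(D|_S)\geq |E(G[S])|-|S|$, which again follows from the break inequality, or is trivial from effectiveness when $|E(G[S])|=0$.

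The main technical input is the Hakimi realization theorem for indegree sequences; this is standard and can be derived in a few lines via Hall's marriage theorem on the edge-vertex bipartite incidence graph or a max-flow argument.  Once it is in place, the proof is essentially bookkeeping matching two conditions that both amount to the assertion that $D-(v)$ is orientable for every $v$.  It is perhaps worth highlighting that no $q$-connectedness of the auxiliary orientations $\mathcal{O}_v$ is required here, which aligns nicely with the vertex-symmetric flavour of the Minkowski-difference description.
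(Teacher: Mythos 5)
Your proof is correct, and after the shared first reduction it follows a genuinely different route from the paper's. Both arguments begin the same way: lattice points of $\mathcal{Z}_G$ are indegree sequences of orientations, and membership of a lattice point $D$ in the Minkowski difference reduces (via the vertices of $\Delta_{n-1,n}$ and convexity) to the condition that $D-(v)$ is orientable for every vertex $v$. Where you diverge is in proving that this condition is equivalent to $D$ being a break divisor. The paper leans on the Baker--Norine/ABKS machinery: it quotes that a break divisor $D$ makes $D-(q)$ $q$-orientable (their Lemma~3.3) for the forward direction, and for the converse runs a contradiction argument with the function $\chi(S,D)$ using their Theorem~4.8 and Lemma~4.11 to upgrade ``orientable'' to ``$q$-orientable''. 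You instead bypass $q$-connectedness entirely: you verify the break inequalities directly from the indegree identity $\sum_{u\in S}\mathrm{indeg}_{\mathcal{O}_v}(u)\geq |E(G[S])|$, and for the converse you invoke the classical orientation-realization criterion (Hakimi/Hall/max-flow: an indegree sequence $d$ is realizable iff $\sum_u d(u)=|E(G)|$ and $\sum_{u\in S}d(u)\geq|E(G[S])|$ for all $S$) and check that the break inequalities supply exactly the needed lower bounds, with the $v\notin S$ and disconnected-subgraph cases handled correctly. Your approach buys self-containedness and elementarity --- the only external input is a standard feasibility theorem rather than three results specific to divisor theory --- while the paper's approach buys the stronger intermediate conclusion that $D-(q)$ is actually $q$-orientable (i.e.\ $q$-connected), which connects more directly to the existing literature on break divisors. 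The one point you should make fully precise is the surjectivity of orientations onto \emph{all} lattice points of $\mathcal{Z}_G$ (not merely that indegree sequences are lattice points); this is standard for graphical zonotopes, and the paper asserts it with the same brevity, but ``choosing an endpoint for each segment'' only gives one containment.
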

 \begin{proof}
 The proof that follows was outlined to us by Chi Ho Yuen.
 Pick an orientation $\p O$ of the edges of $G$.
 Let $\mathrm{indeg}_{{\p O}}(v)$ denote the number of edges directed into the vertex $v$.
 The map $\p O \mapsto (\mathrm{indeg}_{{\p O}}(v))_{v\in V(G)}$ sets up a surjection between orientations on $G$ and lattice points in $\p Z_G$.
 It follows that the lattice points in $\p Z_G -(1^n)$ are precisely the orientable divisors on $G$.
 Thus, to establish the claim it suffices to show that
 \begin{align}
 \label{eq:reinterpreted_claim}
 D\text{ is a break divisor} \Longleftrightarrow D-(q) \text{ is orientable } \forall q\in V(G).
 \end{align}

 First assume that $D$ is a break divisor. Then \cite[Lemma 3.3]{ABKS14} tells us that $D-(q)$ is  $q$-orientable for any $q\in V(G)$. Thus the forward direction is established.

 Now assume that $D$ is a divisor such that $D-(q)$ is orientable for all $q\in V(G)$. We claim that $D-(q)$ is in fact $q$-orientable for all $q\in V(G)$. Having shown this, it will follow from \cite[Lemma 3.3]{ABKS14} that $D$ is a break divisor.

 Given $S\subset V(G)$, let $G[S]$ denote the subgraph of $G$ induced by $S$.
 Let $\chi(S)$ denote the topological Euler characteristic of $G[S]$, i.e. $\chi(S)=V(G[S])-E(G[S])$.
  Given any divisor $D$ define
  \begin{align}
  \chi(S,D)=\deg(D|_S)+\chi(S).
  \end{align}
  Fix $q\in V(G)$.
 Since $D-(q)$ is orientable, by \cite[Theorem 4.8]{ABKS14} we know that
 \begin{align}
 \chi(S,D-(q)) \geq 0
 \end{align}
 for every nonempty subset $S\subset V(G)$.
 If $D-(q)$ is not $q$-orientable, then \cite[Lemma 4.11]{ABKS14} tells us that
 \begin{align}
 \chi(S,D-(q)) \leq 0
 \end{align}
 for some nonempty subset $S\subset V(G)\setminus\{q\}$.
 Thus it must be the case that
 \begin{align}
 \label{eq:to be compared}
 \chi(S,D-(q))=\deg(D-(q)|_S)+\chi(S)=0
 \end{align}
 for some nonempty subset $S\subset V(G)\setminus \{q\}$.

 For such an $S$, pick any $p\in S$ and consider $\chi(S,D-(p))$. Since $p\in S$, we have that
 \begin{align}
 \deg(D-(p)|_S)<\deg(D-(q)|_S).
 \end{align}
 On comparing with \eqref{eq:to be compared}, it follows that $\chi(S,D-(p))<0$. By \cite[Theorem 4.8]{ABKS14}, this contradicts our assumption that $D-(p)$ is orientable. It thus follows that $D-(q)$ is $q$-orientable, which concludes the proof.
 \end{proof}
The above proof gives yet another perspective on the following result; see \cite{Yu17} for more fascinating insights on this matter.
\begin{corollary}
For a connected multigraph $G$, the number of break divisors on $G$ is the number of spanning trees.
\end{corollary}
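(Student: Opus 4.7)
The plan is to exploit the polytopal reformulation provided by Proposition~\ref{prop:break and  lattice points} together with classical facts about graphical zonotopes. By that proposition, $|\mathrm{Break}(G)|$ equals the number of lattice points of the Minkowski difference $\p Z_G-\Delta_{n-1,n}$. Since $\p Z_G$ lies in the affine hyperplane $\sum_i x_i=|E(G)|$ and $\Delta_{n-1,n}$ lies in $\sum_i x_i=n-1$, this erosion is a polytope in $\sum_i x_i=g(G)$ whose lattice points encode break divisors, so the problem reduces to a lattice-point count.

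I would next invoke Stanley's classical theorem decomposing a graphical zonotope as a disjoint union of half-open unimodular parallelepipeds indexed by the spanning trees of $G$. Each such parallelepiped has normalized volume one and contains a unique lattice point, so collectively they witness $\mathrm{vol}(\p Z_G)=\#\{\text{spanning trees of }G\}$. The bridging observation is that the lattice translates of the unimodular simplex $\Delta_{n-1,n}$ pack $\p Z_G$, and each half-open parallelepiped in Stanley's decomposition accommodates exactly one translate of $\Delta_{n-1,n}$ fitting entirely inside $\p Z_G$; this would yield the desired bijection between spanning trees of $G$ and lattice points of $\p Z_G-\Delta_{n-1,n}$.

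The main obstacle is making this final packing argument rigorous, as it requires carefully tracking how Stanley's half-open parallelepipeds (whose ``openness'' depends on a chosen orientation/reference edge) interact with translates of $\Delta_{n-1,n}$ along the boundary. A technically cleaner route bypasses this step altogether and appeals directly to \cite{ABKS14}: break divisors form a canonical system of representatives for $\mathrm{Pic}^{g(G)}(G)$, so $|\mathrm{Break}(G)|=|\mathrm{Pic}^{g(G)}(G)|=|\mathrm{Jac}(G)|$, which equals the number of spanning trees by Kirchhoff's matrix-tree theorem. The content of the present corollary is then that this equality becomes visible purely polytopally through Proposition~\ref{prop:break and  lattice points}.
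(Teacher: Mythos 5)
Your reduction to counting lattice points of $\p Z_G-\Delta_{n-1,n}$ matches the paper's first step. From there, however, the paper simply cites Postnikov \cite[Corollary 11.5]{Pos09}, which asserts that the number of lattice points of $\p Z_G-\Delta_{n-1,n}$ equals the normalized volume of $\p Z_G$, i.e.\ the number of spanning trees. Your first route tries to reprove that citation via the half-open parallelepiped decomposition of the zonotope together with a packing argument, and the step you flag yourself --- that each half-open parallelepiped in the decomposition accommodates exactly one lattice translate of $\Delta_{n-1,n}$ lying entirely inside $\p Z_G$ --- is not a routine technicality but essentially the entire content of Postnikov's result, which he proves by quite different means (mixed subdivisions and the Cayley trick, as the paper's subsequent remark notes). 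As written, that route has a genuine gap.

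Your fallback route is complete and correct, but it is a genuinely different proof from the paper's: break divisors form a system of representatives for the linear equivalence classes of degree-$g(G)$ divisors \cite{ABKS14}, so their number is $|\mathrm{Jac}(G)|$, which equals the number of spanning trees by the matrix-tree theorem. Notably, the paper's remark immediately following the corollary addresses exactly this dichotomy: one can either cite Postnikov and deduce the spanning-tree count for break divisors (the paper's choice), or take the spanning-tree count for break divisors as known and use Proposition~\ref{prop:break and  lattice points} to rederive Postnikov's lattice-point statement. Your fallback amounts to the second option; it proves the corollary, but it forfeits the point the paper is making, namely that the polytopal identity yields a new route to the enumeration of break divisors rather than the other way around.
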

\begin{proof}
By \cite[Corollary 11.5]{Pos09} we know that $\p Z_G-\Delta_{n-1,n}$ has as many lattice points as the volume of $\p Z_G$, and the latter is well known to equal the number of spanning trees of $G$.
\end{proof}
\begin{remark}\emph{
Of course, one could take for granted the fact that the number of break divisors equals the number of spanning trees, and then use Proposition~\ref{prop:break and  lattice points} to prove Postnikov's result \cite[Corollary 11.5]{Pos09}. This approach is quite different from that in \emph{loc. cit.}, which relies on mixed subdivisions and the Cayley trick. }
\end{remark}

\subsection{The case of the complete multigraph}
Recall that  $K_{n}^m$ is the graph on the vertex set $[n]$ with $m$ edges between vertices $i$ and $j$ for all $1\leq i<j\leq n$.  Its genus is given by
\begin{align}
\label{eq:def_genus_knm}
g_{m,n}\coloneqq g(K_n^{m})=m\binom{n}{2}-n+1.
\end{align}
Our focus henceforth is primarily on $K_n^m$.
 Let $\breakd_{m,n}$ denote the set of break divisors on $K_n^m$.
 Let ${\p P}_{m,n}$ be the permutahedron in $\bR^n$ obtained as the convex hull of the $S_n$-orbit of $(m(n-1)-1,m(n-2)-1,\dots,m-1,0)$.
 In the case $m=1$, this permutahedron is exactly the \emph{trimmed permutahedron} that plays a key role in \cite{KST21}.

 \begin{lemma}
 	We have
	\[
	\breakd_{m,n}=\p P_{m,n}\cap \bZ^n.
	\]
 \end{lemma}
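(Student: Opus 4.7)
The plan is to identify $\breakd_{m,n}$ and $\p P_{m,n}\cap\bZ^n$ by showing that they are cut out by the same list of linear inequalities. By Rado's classical description, the permutahedron $\p P_{m,n}$ with vertex orbit $S_n\cdot\mu$, where $\mu\coloneqq(m(n-1)-1,\ldots,m-1,0)$, consists of those $x\in\bR^n$ with $\sum_i x_i=\sum_i\mu_i$ and $\sum_{i\in S}x_i\geq \mu^{\uparrow}_1+\cdots+\mu^{\uparrow}_{|S|}$ for every nonempty $S\subseteq[n]$, where $\mu^{\uparrow}_1\leq\cdots\leq\mu^{\uparrow}_n$ denotes the sorted vector. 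Sorted ascendingly $\mu$ becomes $(0,m-1,2m-1,\ldots,(n-1)m-1)$, and a routine computation gives $\mu^{\uparrow}_1+\cdots+\mu^{\uparrow}_k=m\binom{k}{2}-(k-1)$ for each $1\leq k\leq n$. In particular $\sum_i\mu_i=m\binom{n}{2}-n+1=g_{m,n}$, matching the degree of a break divisor.

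On the break divisor side, the induced subgraph of $K_n^m$ on any nonempty $S\subseteq[n]$ of size $k$ is $K_k^m$ with genus $g(K_k^m)=m\binom{k}{2}-k+1=m\binom{k}{2}-(k-1)$. The defining inequalities $\deg(D|_H)\geq g(H)$, ranging over induced subgraphs, thus become $\sum_{i\in S}D(i)\geq m\binom{k}{2}-(k-1)$ for every nonempty $S$ of size $k$, with $k=1$ encoding effectiveness $D(i)\geq 0$. These are exactly Rado's inequalities for $\p P_{m,n}$, and combined with the degree condition $\sum_i D(i)=g_{m,n}$ they identify the two sets.

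I do not anticipate any real obstacle: the argument is a matching of two families of linear inequalities, with the only non-trivial ingredient being the sum-of-smallest-$k$-entries computation and the observation that $K_n^m[S]=K_{|S|}^m$. As an alternative route, one could instead use Proposition \ref{prop:break and  lattice points}: a support-function check verifies the Minkowski decomposition $\p Z_{K_n^m}=\p P_{m,n}+\Delta_{n-1,n}$, and Minkowski cancellation for convex bodies then yields $\p Z_{K_n^m}-\Delta_{n-1,n}=\p P_{m,n}$, from which the lemma follows on intersecting with $\bZ^n$.
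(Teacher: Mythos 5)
Your proposal is correct and matches the paper's own proof, which likewise gives both arguments: the direct matching of the break-divisor inequalities $\sum_{i\in S}d_i\geq m\binom{|S|}{2}-|S|+1$ against Rado's description of $\p P_{m,n}$, and the alternative via $\p Z_{K_n^m}-\Delta_{n-1,n}=\p P_{m,n}$ combined with Proposition~\ref{prop:break and  lattice points}. Your computation of the partial sums $m\binom{k}{2}-(k-1)$ and the observation that induced subgraphs of $K_n^m$ are again complete multigraphs are exactly the ingredients the paper uses.
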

 \begin{proof}
 We give two arguments. For the first note that the zonotope $\p Z_{K_n^m}$ is given by the $m$-fold dilation of the \emph{standard permutahedron}, i.e. its vertices are given by the $S_n$-orbit of $m\cdot(n-1,n-2,\dots,1,0)$.
 It follows that $\p Z_{K_n^m}-\Delta_{n-1,n}$ is indeed $\p P_{m,n}$. The claim now follows from Proposition~\ref{prop:break and  lattice points}.

 Alternatively, identifying $V(K_n^m)$ with $[n]$ as usual,
 let $D=(d_1,\dots,d_n)\in \breakd_{m,n}$. Then $\sum_{1\leq i\leq n}d_i=g_{m,n}$.
 Since any induced connected subgraph $H$ of $K_{n}^m$ is isomorphic to $K_{j}^m$ for some positive integer $j$, the condition in \eqref{ineq:condition for break}  translates to
 \begin{align}
 	\sum_{i\in S}d_i \geq g_{m,|S|}=m\binom{|S|}{2}-|S|+1.
 \end{align}
 for every nonempty $S\subset [n]$. These inequalities define the  permutahedron ${\p P}_{m,n}$; see \cite{Rad52}.
 \end{proof}
 Since $K_n^m$ has $m^{n-1}n^{n-2}$ spanning trees, we infer that
 \begin{align}
 	|{\p P}_{m,n}\cap \bZ^n|= m^{n-1}n^{n-2}.
 \end{align}
 \begin{example}
 \emph{
 Let $m=2$ and $n=3$. Then $\breakd_{2,3}$ contains 12 elements: the six permutations of $(3,1,0)$, as well as the three permutations each of $(2,2,0)$ and $(2,1,1)$. Note that these elements are exactly the lattice points in the permutahedron ${\p P}_{2,3}$.}
 \end{example}

We now recall another notion of interest.
Fix $q\in V(G)$.
A \bemph{$q$-reduced divisor} \cite[\S~3.1]{BN07} is a divisor $D$ such that $D(v)\geq 0$ for $v\in V(G)\setminus\{q\}$, and additionally, for every nonempty  $S\subset V(G)\setminus \{q\}$ there exists $v\in S$ satisfying $D(v)<\mathrm{outdeg}_S(v)$.\footnote{The chip-firing perspective is helpful here. This condition says that if all vertices in $S$ fire simultaneously, at least one of them will be in debt.}
Here $\mathrm{outdeg}_S(v)$ is the number of edges in $G$ connecting  $v$ to vertices in $V(G)\setminus S$. 
Since the quantity $D(q)$ does not play any role in these inequalities, one can ignore it.
The function $D$ restricted to $V(G)\setminus \{q\}$ is exactly what is known as a \bemph{$G$-parking function} \cite{Pos04}.

We immediately specialize to the case $G=K_n^m$ with vertex set $[n]$, and set $q=n$.
By a result of Hopkins-Gaydarov \cite[Theorem 2.5]{GH16}, fortuitously, the set of $K_m^n$-parking functions may be  characterized as a set of vector parking functions for an appropriate vector. This is also easy to observe from the characterization of $q$-reduced divisors in the preceding paragraph.
 Indeed, the sequence $(d_1,\dots,d_{n-1})$ is a $K_{n}^m$-parking function if its weakly increasing  rearrangement $(\tilde{d}_1,\dots,\tilde{d}_{n-1})$ satisfies
\begin{align}
\tilde{d}_i\leq mi-1.
\end{align}
In other words, $(d_1,\dots,d_{n-1})$ is a $K_{n}^m$-parking function if and only if there are at least $i$ entries $\leq mi-1$ for $1\leq i\leq n-1$.
Denote the set of $K_n^{m}$-parking functions by $\park_{m,n}$.
When $m=1$ this immediately reduces to the definition of classical parking functions.

There is a notion of linear equivalence on divisors defined in \cite{BN07}.
For any connected graph $G$, it turns out that each linear equivalence class of degree $g(G)$ divisors contains a unique break divisor.
Furthermore, every linear equivalence class contains a unique $q$-reduced divisor \cite[Proposition 3.1]{BN07}, and so we infer that
\[
|\breakd_{m,n}|=|\park_{m,n}|=m^{n-1}n^{n-2}.
\]
The characterization of $q$-reduced divisors as vector parking functions implies that $\park_{m,n}$ carries a permutation action of $S_{n-1}$.
That break divisors are lattice points in a certain permutahedron implies that $\breakd_{m,n}$ carries a permutation action of $S_n$.
{\sf Is there a relation between the resulting modules?}

To motivate our main theorem we consider an example.
\begin{example}
\label{ex:demo_main}
\emph{
Consider $m=2$ and $n=3$.
On the one hand, the Frobenius characteristic of the $S_3$ action on $\breakd_{2,3}$ equals
 \[
 \mathrm{Frob}(\breakd_{2,3})=h_{111}+2h_{21}=3s_3+4s_{21}+s_{111},
 \]
 where $h$ denotes the complete homogeneous symmetric function.
}
\emph{On the other hand, the set $\park_{2,3}$ contains 12 elements: ordered pairs $(d_1,d_2)$ where $0\leq d_1\leq 1$ and $0\leq d_2\leq 3$, and their rearrangements. The Frobenius characteristic of the $S_2$ action on $\park_{2,3}$ is
\[
\mathrm{Frob}(\mathrm{Park}_{2,3})=2h_2+5h_{11}=7s_2+5s_{11}.
\]
The reader may now verify that $\mathrm{Park}_{2,3}=\mathrm{Res}_{S_{2}}^{S_3}\breakd_{2,3}$ where $\mathrm{Res}$ denotes restriction.
As we shall see in Theorem~\ref{thm:frob_permutahedron}, this phenomenon is part of a larger picture.
}
\end{example}

Before we offer a unifying perspective on these two symmetric group representations, we make a remark.
It is true that one can see  the fact that $\mathrm{Park}_{m,n}=\mathrm{Res}_{S_{n-1}}^{S_n}\breakd_{m,n}$ from the fact that linear equivalence classes on divisors of degree $g(K_n^m)$ contain a unique break divisor as well as a unique $q$-reduced divisor.
That being said, gleaning any further information about the representation $\breakd_{m,n}$, say character values or the multiplicity of the trivial representation, is not immediate from the definition of break elements.
This opacity motivates the perspective we proceed to describe in what follows.

\section{The modules \texorpdfstring{$\p D_{m,n}$}{D m,n} and \texorpdfstring{$\widehat{\p D}_{m,n}$}{dD m,n} }

Set $N\coloneqq mn$.
Consider the set of $n$-tuples defined as follows:
\begin{align*}
\p D_{m,n}\coloneqq \{(x_1,\dots,x_{n})\suchthat 0\leq x_i\leq N-1, \sum_{1\leq i\leq n}x_i=g_{m,n} \: (\md N)\}.
\end{align*}
The cardinality of $\p D_{m,n}$ is clearly $N^{n-1}$.
The symmetric group $S_n$ acts by permutations and the orbits are indexed by partitions that fit in an $n\times (N-1)$ box and have size congruent to $g_{m,n}$ modulo $N$, or equivalently, multisets of size $n$ with entries drawn from $\{0,\dots,N-1\}$ and summing to $g_{m,n}$ modulo $N$.
Let us denote the set of $S_n$-orbits by $S_n\backslash {\p D}_{m,n}$.
We first compute the cardinality of this set, as we will need it subsequently.

To this end, we recall a result of von Sterneck from the early 1900s; see \cite[Theorem 3]{Ra44} for a statement in English. Given positive integers $a$ and $b$, consider the \bemph{Ramanujan sum} \cite[Theorem 272]{HW08}
\begin{align}
\label{eq:ramanujan sum}
C_b(a)\coloneqq \sum_{\substack{1\leq k\leq b\\ \gcd(k,b)=1}}\mathrm{exp}(2\pi ika/b)=\mu\left(\frac{b}{\gcd(a,b)}\right)\frac{\phi(b)}{\phi\left(\frac{b}{\gcd(a,b)}\right)},
\end{align}
where $\mu$ is the number-theoretic M\"{o}bius function and $\phi$ is the Euler phi function.
\begin{lemma}
The number of multisets of size cardinality $k$ with entries drawn from $\{0,\dots, a-1\}$ with subset sum $b\!\!\mod a$ equals
\[
\frac{1}{a}\sum_{d|a,k}\binom{\frac{a+k}{d}-1}{\frac{k}{d}}C_d(b).
\]
\end{lemma}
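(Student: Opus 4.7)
The plan is to extract the count via a generating function and a roots-of-unity filter, after which the identification with Ramanujan sums falls out from grouping the filter by the greatest common divisor $\gcd(j,a)$.

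Concretely, write $M(k,s)$ for the number of multisets of size $k$ with entries in $\{0,\dots,a-1\}$ and sum exactly $s$. The weighted generating function is
\[
F(x,t) \;:=\; \prod_{i=0}^{a-1}\frac{1}{1-x^{i}t} \;=\; \sum_{k,s\geq 0} M(k,s)\,x^{s}t^{k}.
\]
The quantity we need is $N(k,b) := \sum_{s\equiv b\,(\md a)} M(k,s)$. Setting $\zeta = e^{2\pi i/a}$, the standard roots-of-unity filter gives
\[
N(k,b) \;=\; [t^{k}]\,\frac{1}{a}\sum_{j=0}^{a-1}\zeta^{-jb}\,F(\zeta^{j},t).
\]

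The key computational step is to simplify $F(\zeta^{j},t)$ for each $j$. Letting $d=\gcd(j,a)$, the element $\zeta^{j}$ has multiplicative order $a/d$, and as $i$ ranges over $\{0,\dots,a-1\}$ the powers $\zeta^{ji}$ traverse each $(a/d)$-th root of unity exactly $d$ times. Using the identity $\prod_{\omega^{m}=1}(1-\omega t) = 1-t^{m}$, this yields the clean factorization
\[
F(\zeta^{j},t) \;=\; \Bigl(\prod_{\omega^{a/d}=1}(1-\omega t)\Bigr)^{-d} \;=\; (1-t^{a/d})^{-d}.
\]
Next, I group the sum over $j$ according to $d=\gcd(j,a)$. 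The inner character sum is precisely a Ramanujan sum, since the $j$'s with $\gcd(j,a)=d$ are $j=dk'$ with $1\leq k'\leq a/d$ and $\gcd(k',a/d)=1$, giving
\[
\sum_{\gcd(j,a)=d}\zeta^{-jb} \;=\; \sum_{\substack{1\leq k'\leq a/d\\ \gcd(k',a/d)=1}} e^{-2\pi i k' b/(a/d)} \;=\; C_{a/d}(-b) \;=\; C_{a/d}(b),
\]
using that Ramanujan sums are even in $b$ (immediate from \eqref{eq:ramanujan sum}).

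Substituting and reindexing $d \leftrightarrow a/d$, we obtain
\[
N(k,b) \;=\; \frac{1}{a}\sum_{d\mid a} C_{d}(b)\,[t^{k}](1-t^{d})^{-a/d}.
\]
Finally, the binomial series gives $[t^{k}](1-t^{d})^{-a/d}=\binom{a/d+k/d-1}{k/d}=\binom{(a+k)/d-1}{k/d}$ when $d\mid k$, and $0$ otherwise. Hence only divisors $d$ of $\gcd(a,k)$ survive, producing exactly the stated formula. I do not foresee a genuine obstacle: the two slightly delicate points are the bookkeeping in the factorization $F(\zeta^{j},t)=(1-t^{a/d})^{-d}$ (where the multiplicity $d$ must be tracked carefully) and the use of the evenness of $C_{d}$; both are routine once the grouping by $\gcd(j,a)$ is made.
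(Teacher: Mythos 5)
Your proof is correct. Note that the paper itself offers no proof of this lemma: it is quoted as a classical result of von Sterneck, with a pointer to the literature for a statement in English. Your argument — the generating function $\prod_{i=0}^{a-1}(1-x^it)^{-1}$, the roots-of-unity filter in $x$, the collapse $F(\zeta^j,t)=(1-t^{a/d})^{-d}$ for $d=\gcd(j,a)$, the grouping of the character sum into $C_{a/d}(b)$, and the final coefficient extraction forcing $d\mid\gcd(a,k)$ — is the standard modern derivation of von Sterneck's formula, and each step checks out: the multiplicity $d$ in the factorization is handled correctly (the map $i\mapsto ji \bmod a$ is $d$-to-one onto the subgroup of index $d$), the evenness $C_{a/d}(-b)=C_{a/d}(b)$ follows from the substitution $k'\mapsto -k'$ in the defining sum (or from the closed form in terms of $\mu$ and $\phi$), and $[t^k](1-t^d)^{-a/d}=\binom{(a+k)/d-1}{k/d}$ for $d\mid k$ is the binomial series. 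So your write-up supplies a self-contained proof where the paper relies on a citation; the only thing the citation buys that your argument does not is brevity.
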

We are interested in the case $a=N$, $k=n$, and $b=g_{m,n}$.
We thus have
\begin{align}
|S_n\backslash {\p D}_{m,n}|=\frac{1}{mn}\sum_{d|n}\binom{\frac{(m+1)n}{d}-1}{\frac{n}{d}}C_d(g_{m,n}).
\end{align}
It remains to substitute the Ramanujan sum $C_d(g_{m,n})$ for $d|n$. Equation~\eqref{eq:ramanujan sum} tells us that
\begin{align}
C_d(g_{m,n})=\mu(d/r)\frac{\phi(d)}{\phi(d/r)},
\end{align}
where $r=\gcd(d,g_{m,n})=\gcd(d,m\binom{n}{2}-n+1)$. If $m$ is even or $n$ is odd, we must have $r=1$.
This leaves the case $m$ being odd and $n$ being even. Taking cases based on $n\!\mod 4$ we find that if $n\!\mod 4=0$, then  $r=1$.  Otherwise
\[
r=\left\lbrace\begin{array}{ll}1 & d \text{ odd,}\\ 2 & d \text{ even.}\end{array}\right.
\]
In summary, our calculations above imply that if $m$ is odd and $n\equiv 2\!\!\mod 4$, then
\begin{align}
|S_n\backslash \p D_{m,n}|=
\frac{1}{mn}\displaystyle\sum_{\substack{d|n\\ d \text{ odd}}}\mu(d)\binom{\frac{(m+1)n}{d}-1}{\frac{n}{d}}-\frac{1}{mn}\displaystyle\sum_{\substack{d|n\\ d \text{ even}}}\mu(d)\binom{\frac{(m+1)n}{d}-1}{\frac{n}{d}}.
\end{align}
In all other cases we get
\begin{align}
|S_n\backslash \p D_{m,n}|=
\frac{1}{mn}\displaystyle\sum_{d|n}\mu(d)\binom{\frac{(m+1)n}{d}-1}{\frac{n}{d}}.
\end{align}
Rewriting $d$ as $n/d$ and modifying appropriately, we record the preceding computation as
\begin{proposition}
\label{prop:orbits of D}
The number of orbits of $\p D_{m,n}$ under the permutation action of $	S_n$ is
\begin{align*}
|S_n\backslash \p D_{m,n}|=\frac{1}{n}\sum_{d|n} (-1)^{m(n+d)}\mu(n/d)\binom{(m+1)d-1}{md}.
\end{align*}
\end{proposition}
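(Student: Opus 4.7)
The plan is to consolidate the two piecewise formulas derived in the preceding paragraphs into the single uniform expression claimed in the proposition. In the generic regime (when $m$ is even, or $n$ is odd, or $m$ is odd with $n\equiv 0\pmod 4$) the derivation yields
\[
|S_n\backslash \p D_{m,n}| = \frac{1}{mn}\sum_{d\mid n}\mu(d)\binom{\frac{(m+1)n}{d}-1}{\frac{n}{d}},
\]
while in the exceptional regime ($m$ odd and $n\equiv 2\pmod 4$) odd and even divisors of $n$ contribute with opposite signs. My goal is to rewrite both as a single signed sum indexed by divisors $d$ of $n$ with sign $(-1)^{m(n+d)}$.

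First I would apply the divisor involution $d\mapsto n/d$ to reindex both sums; this sends $\binom{((m+1)n/d)-1}{n/d}$ to $\binom{(m+1)d-1}{d}$ and $\mu(d)$ to $\mu(n/d)$. Next I invoke the routine binomial identity $\binom{(m+1)d-1}{d}=m\binom{(m+1)d-1}{md}$, verified by comparing the factorial expressions of the two sides; multiplying through by this factor of $m$ absorbs the $m$ in the denominator and turns the prefactor $\tfrac{1}{mn}$ into $\tfrac{1}{n}$. What results is an expression of the form $\tfrac{1}{n}\sum_{d\mid n}\varepsilon(d)\mu(n/d)\binom{(m+1)d-1}{md}$ for signs $\varepsilon(d)\in\{\pm 1\}$ dictated by the regime.

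The main task is then to verify that in every regime one has $\varepsilon(d)=(-1)^{m(n+d)}$, at least on those $d$ where $\mu(n/d)\neq 0$. I would carry this out by parity analysis. (i) If $m$ is even, $(-1)^{m(n+d)}=1=\varepsilon(d)$. (ii) If $m$ is odd and $n$ is odd, every divisor $d\mid n$ is odd, so $(-1)^{n+d}=1=\varepsilon(d)$. (iii) If $m$ is odd and $n\equiv 0\pmod 4$, then $(-1)^{m(n+d)}=(-1)^d$; for every odd divisor $d$ of $n$ we have $4\mid n/d$ and hence $\mu(n/d)=0$, so the sign is inconsequential there, while for even $d$ the sign $(-1)^d=1$ agrees with $\varepsilon(d)=1$. (iv) If $m$ is odd and $n\equiv 2\pmod 4$, the involution $d\mapsto n/d$ swaps odd and even divisors of $n$, converting the exceptional split formula into $\sum_{d\,\text{even}}-\sum_{d\,\text{odd}}$, i.e.\ $\varepsilon(d)=(-1)^d=(-1)^{m(n+d)}$. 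The principal obstacle is bookkeeping in the exceptional regime; the crucial observation throughout is that the M\"obius function vanishes precisely on the subset of divisors where a sign mismatch might otherwise occur (case (iii)), so the single unified sign formula holds after the reindexing.
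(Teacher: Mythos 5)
Your proposal is correct and follows exactly the route the paper takes: the paper derives the same piecewise formulas via von Sterneck's lemma and then simply says ``rewriting $d$ as $n/d$ and modifying appropriately,'' which is precisely the reindexing, the identity $\binom{(m+1)d-1}{d}=m\binom{(m+1)d-1}{md}$, and the sign bookkeeping you carry out. Your case analysis (including the observation that $\mu(n/d)=0$ kills the potentially mismatched odd divisors when $4\mid n$) correctly fills in the details the paper leaves implicit.
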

Up until this point we have only considered the $S_n$ action on ${\p D}_{m,n}$. There is a $\mathbb{Z}_n$ action on this set as well. It does not arise from cyclic rotation of coordinates and instead considers `translation' by the vector $m\cdot (1,\dots,1)$. We explore this action next.

\subsection{The module \texorpdfstring{$\widehat{\p D}_{m,n}$}{DHat m,n}}

We  define a simple cyclic action on $\p D_{m,n}$ which groups its elements into $m^{n-1}n^{n-2}$ cyclic classes. This cyclic action commutes with the action of $S_n$, and thus the cyclic classes end up inheriting an $S_n$ action too.

Define the \bemph{shift} map $\shift$ mapping $\{0,\dots,N-1\}^n$ to itself via
\begin{align*}
\shift(x_1,\dots,x_n)\coloneqq (x_1+m,\dots,x_n+m),
\end{align*}
where addition is performed modulo $N$.
This gives an equivalence relation $\sim$ on $\p D_{m,n}$: two sequences are \emph{shift-equivalent} if one is obtained by applying $\shift^j$ to the other  for some $j\in \bN$.
As mentioned above, $S_n$ acts on $\widehat{\p D}_{m,n}\coloneqq \p D_{m,n}/\sim$.

In addition to having the right dimension, $\widehat{\p D}_{m,n}$ turns out to possess an additional desirable property:  every equivalence class in $\p D_{m,n}/\!\sim$  contains a unique break divisor in $\breakd_{m,n}$ and, assuming we drop the last coordinate, a unique $G$-parking function in $\park_{m,n}$.
Indeed, going back to Example~\ref{ex:demo_main}, note for instance that the shift-equivalence class of $(2,2,0)\in \breakd_{2,3}$ is $\{(2,2,0),(4,4,2),(0,0,2)\}$.
Omitting the last coordinates, we see that $(0,0)$ is the unique element in $\park_{2,3}$. Let us henceforth, given $\mathbf{a}=(a_1,\dots,a_{n-1})$, denote by $\pi(\mathbf{a})$ the sequence $(a_1,\dots,a_{n-1})$ obtained by omitting the last coordinate.
We now proceed toward establishing the claim in general.

\begin{proposition}
\label{prop:unique parking function}
Given a shift equivalence class $\mathcal{C}$ in $\widehat{\p D}_{m,n}$, there exists a unique element $\mathbf{a}\in {\p C}$ such that $\pi(\mathbf{a})\in \park_{m,n}$.
\end{proposition}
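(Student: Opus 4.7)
The plan is to translate the parking function condition into a cyclic partial-sum condition and then to invoke a cycle lemma of Dvoretzky--Motzkin / Raney type.

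Fix a representative $\mathbf{x}=(x_1,\dots,x_n)\in\mathcal{C}$ and, for each $k\in[n-1]$, write $x_k=\alpha_km+r_k$ with $\alpha_k\in\{0,\dots,n-1\}$ and $r_k\in\{0,\dots,m-1\}$. Shifting by $jm$ modulo $mn$ acts as $\alpha_k\mapsto(\alpha_k+j)\bmod n$ while leaving each $r_k$ fixed, so the condition that $\pi(\shift^j(\mathbf{x}))\in\park_{m,n}$---equivalently, that at least $i$ of the entries in $\pi(\shift^j(\mathbf{x}))$ are $<mi$ for each $i\in[n-1]$---becomes: for each $i\in[n-1]$, at least $i$ of the values $(\alpha_k+j)\bmod n$ (with $k\in[n-1]$) lie in $\{0,\dots,i-1\}$.

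Next, setting $T_\ell:=|\{k\in[n-1]:\alpha_k=\ell\}|$ and $u_\ell:=T_\ell-1$, one obtains $\sum_{\ell}u_\ell=-1$, and the preceding condition is equivalent to
\[
\sum_{\ell=0}^{i-1}u_{(\ell-j)\bmod n}\ge 0 \quad\text{for all } i=1,\dots,n-1.
\]
Define partial sums $S_k:=u_0+\cdots+u_{k-1}$ for $0\le k\le n$, so that $S_0=0$ and $S_n=-1$, and extend to $\bZ$ by the periodicity $S_{k+n}=S_k-1$. After substituting $j':=(-j)\bmod n$, the condition rewrites as $S_{j'+i}\ge S_{j'}$ for $i=1,\dots,n-1$. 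Unpacking via the periodicity, this amounts to $S_i\ge S_{j'}$ for $j'<i\le n-1$ together with $S_i>S_{j'}$ for $0\le i<j'$. Equivalently, $j'$ is the \emph{smallest} index in $\{0,\dots,n-1\}$ at which $S_{\bullet}$ attains its minimum; such a $j'$ exists and is unique, which yields the required unique $j=(-j')\bmod n$.

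The main subtlety is the index bookkeeping in the final cycle-lemma step: the ``$-1$'' that arises when wrapping past index $n$ (via $S_{k+n}=S_k-1$) is precisely what creates the asymmetry between indices before and after $j'$, and thus what breaks ties when $S_{\bullet}$ achieves its minimum at multiple indices, pinning down a unique $j'$.
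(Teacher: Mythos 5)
Your proof is correct. It shares the paper's essential engine --- a cycle-lemma argument applied to a length-$n$ cyclic sequence of nonnegative integers summing to $n-1$ --- but the sequence you apply it to is different, and the route to it is more direct. The paper follows Pollak's circular-parking argument: it actually runs the parking process on $N=mn$ spots arranged in a circle, records the set $O_{\mathbf b}$ of \emph{occupied} spots, and sets $c_i=|O_{\mathbf b}\cap\{m(i-1),\dots,mi-1\}|$ before invoking the cycle lemma. You instead bypass the parking dynamics entirely: writing $x_k=\alpha_k m+r_k$, you observe that $\shift$ fixes the remainders and cyclically increments the quotients, reduce the vector-parking-function condition to the statement that at least $i$ of the $\alpha_k$ lie in $\{0,\dots,i-1\}$, and apply the cycle lemma (in its ``unique smallest minimizer of the partial sums'' form) to the preference counts $T_\ell$ rather than to the occupancy counts $c_i$. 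What your version buys is transparency and self-containment: the equivalence between ``$\pi(\shift^j(\mathbf x))\in\park_{m,n}$'' and the partial-sum condition is an explicit chain of rewritings, whereas the paper's final step (that the rotation singled out by the cycle lemma on the $c_i$ is the one whose shift lands in $\park_{m,n}$) is left somewhat implicit and requires unwinding how parking redistributes cars across blocks. The paper's version, in exchange, makes the connection to the classical Pollak argument for parking functions visible. One small presentational nit in yours: in the sentence ``$S_i\ge S_{j'}$ for $j'<i\le n-1$'' you silently drop the constraint coming from $i=n-j'$, namely $S_n=-1\ge S_{j'}$; this is harmless because for $j'\ge 1$ it is implied by $S_0>S_{j'}$ and integrality, but it is worth a half-sentence to say so, since it is exactly the inequality that guarantees the wrap-around conditions $S_t>S_{j'}$ for $t<j'$ are consistent with $t=0$.
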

\begin{proof}
We adapt a folklore argument for counting classical parking functions attributed to Pollak.

Fix $\mathbf{b}\in {\p C}$.
Consider $N$ parking spots labeled $0$ through $N-1$ arranged clockwise along a circle, with $N-1$ neighboring $0$.
Consider $n-1$ cars labeled $1$ through $n-1$, and let $b_i$ denote the preferred parking spot for car $i$.
The `usual' rules of parking apply: the cars come in the order $1$ through $n-1$, each car takes its preferred spot if it is free, otherwise continues clockwise and parks at the next free spot.

Let $O_{\mathbf{b}}$ denote the set of occupied spots after all cars have parked.
We consider the parking spots as $n$ contiguous blocks of size $m$ each. Define the sequence $c=(c_1,\dots,c_n)$ by setting
\[
c_i=O_{\mathbf{b}}\cap \{m(i-1),\dots,mi-1\}.
\]
Clearly $c_1+\cdots+c_n=n-1$. A routine application of the cycle lemma implies that there is a unique rotation $\tilde{c}=(c_{j+1},\dots,c_n,c_1,\dots,c_j)$ of $c$ with the property that
\[
\tilde{c}_1+\dots+\tilde{c}_k \geq k
\]
for all $1\leq k\leq n-1$. This in turn means that $\tilde{b}=\shift^{n-j}(\mathbf{b})$ has the property that $\pi(\tilde{b})\in\park_{m,n}$, and that $j$ is the only choice with this property.
 \end{proof}
Note that the proof makes no use of the last coordinate of elements in ${\p D}_{m,n}$.
\begin{example}
\emph{
Consider $m=3$ and $n=5$. Pick $\mathbf{b}=(3,13,7,13,5)\in {\p D}_{3,5}$. The remaining elements in the shift-equivalence class of $\mathbf{b}$ are
\[
\{(6,1,10,1,8),(9,4,13,4,11),(12,7,1,7,14),(0,10,4,10,2)\}.
\]
When we park cars in the spots given by $\pi(\mathbf{b})=(3,13,7,13)$, the occupied spots are given by $\{3,7,13,14\}$. Thus the sequence $c=(c_1,\dots,c_5)$ is given by $(0,1,1,0,2)$. The rotated version $\tilde{c}$ with the desired property is $(c_5,c_1,\dots,c_4)=(2,0,1,1,0)$.
Now consider $\shift^{5-4}(\mathbf{b})=(6,1,10,1,8)$. It is easily checked that $(6,1,10,1)\in \park_{3,5}$.}
\end{example}

Next we show that every shift-equivalence class in $\widehat{\p D}_{m,n}$ contains a break divisor.
We will need a preliminary lemma.
Let $\Lambda_n$ denote the set of partitions $\lambda=(\lambda_1\geq \cdots \geq \lambda_n\geq 0)$ in $\bN^n$.
Given $\mathbf{x}=(x_1,\dots,x_n)\in \bN^n$, define $\sort(\mathbf{x})$ to be the partition obtained by sorting $\mathbf{x}$ in nonincreasing order.

\begin{lemma}\label{lem:unique_rep_dominated}
Given $\lambda=(\lambda_1,\dots,\lambda_n)\in \Lambda_n\cap \breakd_{m,n}$, no element in the set $\{\sort\circ \shift^j(\lambda)\suchthat 1\leq j\leq n-1\}$ belongs to $\Lambda_n\cap \breakd_{m,n}$.
\end{lemma}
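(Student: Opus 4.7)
The plan is to show that for each $j \in \{1,\ldots,n-1\}$, the sorted sequence $\nu := \sort \circ \shift^j(\lambda)$ violates one of the inequalities characterizing the permutahedron $\p P_{m,n}$, and hence cannot lie in $\breakd_{m,n} = \p P_{m,n} \cap \bZ^n$. First I would describe $\nu$ explicitly. Because $\lambda$ is weakly decreasing with every $\lambda_i \leq m(n-1)-1$, the entries of $\shift^j(\lambda)$ that wrap modulo $N = mn$ are exactly those with $\lambda_i \geq m(n-j)$, and by monotonicity they form a prefix $\lambda_1,\ldots,\lambda_k$ for a uniquely determined $k$ depending on $j$. The wrapped entries land in $[0, mj-1]$ while the unwrapped ones land in $[mj, mn-1]$, so after sorting:
\begin{align*}
\nu_i = \lambda_{k+i} + mj \text{ for } 1 \leq i \leq n-k, \qquad \nu_i = \lambda_{i-(n-k)} - m(n-j) \text{ for } n-k < i \leq n.
\end{align*}

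Summing these entries yields $\sum_i \nu_i = g_{m,n} + mn(j-k)$, so $\nu \in \breakd_{m,n}$ already forces $k = j$. It remains to produce a contradiction in that case. The strategy is to play off the permutahedron inequality for $\nu$ at level $\ell = n-j$ against the one for $\lambda$ at level $\ell = j$. Writing $\mu := (m(n-1)-1,\ldots,m-1,0)$ for the dominance-maximal element of $\p P_{m,n}$, the explicit form of $\nu$ yields
\begin{align*}
\sum_{i=1}^{n-j} \nu_i = g_{m,n} - \sum_{i=1}^j \lambda_i + mj(n-j),
\end{align*}
and a short bookkeeping of the last $j$ entries of $\mu$ gives $\sum_{i=1}^{n-j} \mu_i = g_{m,n} - m\binom{j}{2} + j - 1$. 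The inequality $\sum_{i=1}^{n-j}\nu_i \leq \sum_{i=1}^{n-j}\mu_i$ then rearranges to the lower bound $\sum_{i=1}^j \lambda_i \geq mjn - m\binom{j+1}{2} - j + 1$, while the analogous inequality for $\lambda$ at level $j$ reads $\sum_{i=1}^j \lambda_i \leq mjn - m\binom{j+1}{2} - j$. These two bounds differ by exactly one, producing the desired contradiction.

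The main obstacle is selecting the correct pairing of partial-sum levels—$n-j$ on the $\nu$ side against $j$ on the $\lambda$ side. Once this pairing is identified, both the degree-reduction to $k = j$ and the closing off-by-one clash are routine algebraic manipulations given the description of $\p P_{m,n}$ as the convex hull of the $S_n$-orbit of $\mu$.
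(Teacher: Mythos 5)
Your proof is correct and follows essentially the same route as the paper's: you reduce to the case where the number of wrapped entries equals $j$ via the degree computation, and then derive the same off-by-one contradiction by pairing the dominance inequality at level $j$ for $\lambda$ against the one at level $n-j$ for $\sort\circ\shift^j(\lambda)$. The only difference is cosmetic — you describe the shifted-and-sorted sequence directly via the wrap threshold $\lambda_i\geq m(n-j)$, whereas the paper encodes the same decomposition using a lattice-path rotation picture.
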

\begin{proof}
The argument is entirely similar to \cite[Lemma 4.2]{KST21}; one simply needs to incorporate the parameter $m$ carefully. For the sake of completeness, we give the argument.

Any $\lambda$ sitting (in French notation) inside an $n\times N$ box can be viewed  as  a lattice path $L_{\lambda}$ going from $(N,0)$ to $(0,n)$.
We extend this to an bi-infinite path $L_{\lambda}^{\infty}$ by repetition.
Label the horizontal steps in $L_{\lambda}$ with integers $0$ through $N-1$ going right to left.
Fix a $j$ such that $0\leq j\leq n-1$ and  consider the fragment $L'$ of $L_{\lambda}^{\infty}$ of length $(m+1)n$  that starts with the horizontal step labeled $mj$ and proceeds northwest.

Observe that  $L'$ determines the partition $\sort\circ \shift^j(\lambda)$ when viewed in the $n\times N$ box it lives in.
If we let $i$ denote the number of vertical steps in $L_{\lambda}$ preceding the horizontal step labeled $mj$, then we have
\begin{align}\label{eqn:size_change_upon_shifting}
	|\sort\circ \shift^j(\lambda)|=|\lambda|+(j-i)N.
\end{align}

\medskip

Now suppose there exists $1\leq j\leq n-1$ such that $\sort\circ \shift^j(\lambda)\in\Lambda_n\cap \breakd_{m,n}$.
By \eqref{eqn:size_change_upon_shifting} we must have $j=i$.
Thus, the horizontal step labeled $mj$ must touch the diagonal $x+my=N$.
Let $\nu=(\nu_1,\dots,\nu_{n-j})$ be the partition determined by the subpath of $L_{\lambda}$ restricted to the $(n-j)\times m(n-j)$ box in the top left.
Let $\mu=(\mu_1,\dots,\mu_j)$ be the partition determined by the subpath of $L_{\lambda}$ restricted to the $j\times mj$ box in the bottom right.

Observe that
\begin{align}
\lambda&=(m(n-j)+\mu_1,m(n-j)+\mu_2,\dots,m(n-j)+\mu_{j},\nu_1,\dots,\nu_{n-j})\\
\sort\circ \shift^j(\lambda)&=(mj+\nu_1,mj+\nu_2,\dots,mj+\nu_{n-j},\mu_1,\dots,\mu_j).
\end{align}

\medskip

Since $\lambda\in\Lambda_n\cap \breakd_{m,n}$, by definition it is dominated by the partition $\delta_{m,n}=(m(n-1)-1,m(n-2)-1,\dots,m\cdot 1-1,0)$.
By comparing the sum of the first $k$ parts of $\lambda$ with that of the first $k$ parts of $\delta_{m,n}$, we have
\begin{align}
\sum_{k=1}^{j}(m(n-j)+\mu_k) \leq (m(n-1)-1)+\cdots+ (m(n-j)-1).
\end{align}
Since the left-hand side is $|\lambda|-|\nu|=m\binom{n}{2}-n+1-|\nu|$, we may rewrite the above inequality as
\begin{align}\label{eqn:initial_ineq}
|\nu| \geq m\binom{n-j}{2}-(n-j)+1.
\end{align}

On the other hand, since our assumption is that $\sort\circ \shift^j(\lambda)$ is also dominated by $\delta_{m,n}$, by comparing the sum of the first $n-j$ parts we obtain
\begin{align}
\sum_{k=1}^{n-j}(mj+\nu_k)\leq (m(n-1)-1)+\cdots +(mj-1).
\end{align}
This in turn may be rewritten as
\begin{align}
\label{eqn:final_ineq}
	|\nu|\leq m\binom{n-j}{2}-(n-j),
\end{align}
which is in contradiction with the inequality in \eqref{eqn:initial_ineq}.
\end{proof}

\begin{proposition}
\label{prop:unique break divisor}
Given a shift equivalence class $\mathcal{C}$ in $\widehat{\p D}_{m,n}$, there exists a unique element $\mathbf{a}\in {\p C}$ such that $\mathbf{a}\in \breakd_{m,n}$.
\end{proposition}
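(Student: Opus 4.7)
My plan is to prove uniqueness first, and then deduce existence from a cardinality match rather than construct the break representative by hand.

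For uniqueness, the key observation is that $\breakd_{m,n}$, being the set of lattice points of the $S_n$-invariant permutahedron $\p P_{m,n}$, is itself $S_n$-stable. Hence $\mathbf{a}\in\breakd_{m,n}$ if and only if $\sort(\mathbf{a})\in\Lambda_n\cap\breakd_{m,n}$. Moreover, $\shift$ acts by the same additive translation on every coordinate and therefore commutes with permutations of coordinates, so $\sort(\shift^j(\mathbf{a}))=\sort(\shift^j(\sort(\mathbf{a})))$ for every $j$. Now suppose $\mathbf{a},\mathbf{a}'\in\mathcal{C}$ are both break divisors with $\mathbf{a}'=\shift^j(\mathbf{a})$ for some $0\le j\le n-1$. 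Setting $\lambda=\sort(\mathbf{a})\in\Lambda_n\cap\breakd_{m,n}$, the above display forces $\sort(\shift^j(\lambda))\in\Lambda_n\cap\breakd_{m,n}$ as well. Lemma~\ref{lem:unique_rep_dominated} then rules out $1\le j\le n-1$, so $j=0$ and $\mathbf{a}'=\mathbf{a}$.

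For existence, the previous paragraph is precisely the statement that the map $\mathbf{a}\mapsto[\mathbf{a}]$ from $\breakd_{m,n}$ to $\widehat{\p D}_{m,n}$ is injective. Since $|\breakd_{m,n}|=m^{n-1}n^{n-2}$ (the spanning-tree count for $K_n^m$ recorded earlier) and $|\widehat{\p D}_{m,n}|=|\park_{m,n}|=m^{n-1}n^{n-2}$ by Proposition~\ref{prop:unique parking function}, this injection is automatically a bijection. Every shift-equivalence class therefore contains a (necessarily unique) break divisor.

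The main technical input has already been absorbed into Lemma~\ref{lem:unique_rep_dominated}, so I do not expect any serious obstacle here; the proposition is a short combinatorial corollary of that lemma combined with the cardinality count coming from Proposition~\ref{prop:unique parking function}. If one insisted on a direct construction of the break representative bypassing any counting, the natural alternative would be to track the sizes $|\sort(\shift^j(\lambda))|=|\lambda|+(j-i_j)N$ across $j=0,\dots,n-1$ via \eqref{eqn:size_change_upon_shifting}, and isolate, by a cycle-lemma-style argument mirroring the proof of Proposition~\ref{prop:unique parking function}, the unique shift whose lattice path fits inside the region dominated by $\delta_{m,n}$. This route would be more delicate but more self-contained.
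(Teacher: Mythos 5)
Your proof is correct and takes essentially the same route as the paper: an injection from $\breakd_{m,n}$ to the set of shift classes via Lemma~\ref{lem:unique_rep_dominated}, upgraded to a bijection by the cardinality match. You additionally spell out the injectivity details (sort-equivariance of $\shift$ and $S_n$-stability of $\breakd_{m,n}$) that the paper delegates to the analogous argument in \cite[Theorem 4.3]{KST21}, which is a welcome bit of self-containedness but not a different method.
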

\begin{proof}
Consider the map $\phi:\breakd_{m,n} \to \widehat{\p D}_{m,n}$ sending a break divisor $\mathbf{b}$ to the unique shift equivalence class $[\mathbf{b}]$ that contains it. Since $|\breakd_{m,n}|=|\widehat{\p D}_{m,n}|$, to prove the claim we only need to show that $\phi$ is an injection. This is straightforward given Lemma~\ref{lem:unique_rep_dominated}, and the details are the same as the proof of \cite[Theorem 4.3]{KST21}. So we omit them.
\end{proof}

We are now ready to state our main theorem connecting the various pieces.

\begin{theorem}\label{thm:frob_permutahedron}
	The representation $\breakd_{m,n}$  is isomorphic to the representation $\widehat{\p D}_{m,n}$.
Furthermore, the number of $S_n$-orbits on $\breakd_{m,n}$ equals the numerical DT-invariant $\mathrm{DT}^{m+1}_n$ for the $(m+1)$-loop quiver \cite{Rei11,Rei12}.
More precisely
\[
|S_n\backslash \widehat{\p D}_{m,n}|=
\frac{1}{n^2}\sum_{d|n} (-1)^{m(n+d)}\mu(n/d)\binom{(m+1)d-1}{md}.
\]
Finally, we have
	\[
\mathrm{Res}_{S_{n-1}}^{S_n}(\breakd_{m,n})=\park_{m,n}.
\]
\end{theorem}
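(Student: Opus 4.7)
The plan is to establish the three assertions in sequence, using Propositions~\ref{prop:unique parking function} and~\ref{prop:unique break divisor} as the key inputs together with compatibility of the $S_n$- and $\mathbb{Z}_n$-actions. For the isomorphism $\breakd_{m,n}\cong\widehat{\p D}_{m,n}$, I first observe that the bijection $\phi:\breakd_{m,n}\to\widehat{\p D}_{m,n}$, $\mathbf{b}\mapsto[\mathbf{b}]$, supplied by Proposition~\ref{prop:unique break divisor} is $S_n$-equivariant. Indeed, both actions are by coordinate permutation, the defining inequalities for break divisors are symmetric, and $\shift$ translates by the $S_n$-invariant vector $(m,\dots,m)$ modulo $N$, so $\shift$ and the $S_n$-action commute; hence $\phi(\sigma\mathbf{b})=[\sigma\mathbf{b}]=\sigma[\mathbf{b}]$.

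For the restriction statement, I will define $\psi:\widehat{\p D}_{m,n}\to\park_{m,n}$ by $\mathcal{C}\mapsto\pi(\mathbf{a})$, where $\mathbf{a}\in\mathcal{C}$ is the canonical parking representative supplied by Proposition~\ref{prop:unique parking function}. For $\sigma\in S_{n-1}$ (fixing the last coordinate), $\sigma$ commutes with $\shift$, satisfies $\pi(\sigma\mathbf{x})=\sigma(\pi\mathbf{x})$, and preserves $\park_{m,n}$. Consequently $\sigma\mathbf{a}\in[\sigma\mathbf{b}]$ and $\pi(\sigma\mathbf{a})\in\park_{m,n}$; uniqueness in Proposition~\ref{prop:unique parking function} forces $\sigma\mathbf{a}$ to be the canonical representative of $[\sigma\mathbf{b}]$, so $\psi(\sigma\mathcal{C})=\sigma\psi(\mathcal{C})$. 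The composition $\psi\circ\phi:\breakd_{m,n}\to\park_{m,n}$ is then an $S_{n-1}$-equivariant bijection, yielding the restriction identity.

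For the orbit count, $S_n$-equivariance of $\phi$ gives $|S_n\backslash\widehat{\p D}_{m,n}|=|S_n\backslash\breakd_{m,n}|$. The key claim is that the $\mathbb{Z}_n$-action on the set of $S_n$-orbits in $\p D_{m,n}$ induced by $\shift$ is free: if $\shift^j$ fixed an $S_n$-orbit $O$ with $0<j<n$, then for a break divisor $\mathbf{b}\in O$ we would have $\shift^j(\mathbf{b})=\sigma\mathbf{b}\in\breakd_{m,n}$, producing two distinct break divisors in the shift-class $[\mathbf{b}]$ (distinct because $\shift$ acts freely on $\p D_{m,n}$, as $mj\not\equiv 0\pmod{mn}$), contradicting Proposition~\ref{prop:unique break divisor}. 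Freeness implies each $(S_n\times\mathbb{Z}_n)$-orbit on $\p D_{m,n}$ contains exactly $n$ distinct $S_n$-orbits, so $|S_n\backslash\widehat{\p D}_{m,n}|=\tfrac{1}{n}|S_n\backslash\p D_{m,n}|$, and the stated closed form follows from Proposition~\ref{prop:orbits of D}. Identifying this count with $\mathrm{DT}_n^{m+1}$ is then a standard Möbius inversion of the defining Euler product for $F(t)$ as in \cite{Rei11,Rei12}.

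The main obstacle I anticipate is the freeness argument in the third step; this is precisely where Proposition~\ref{prop:unique break divisor} is leveraged to convert uniqueness of the break representative into a clean orbit-counting formula. The remaining equivariance checks are routine bookkeeping once the two uniqueness propositions are in hand, and matching the resulting Möbius sum with $\mathrm{DT}_n^{m+1}$ is a formal coefficient extraction from the Euler product.
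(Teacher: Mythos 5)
Your proposal is correct and follows essentially the same route as the paper: $S_n$-equivariance of the bijection from Proposition~\ref{prop:unique break divisor} gives the isomorphism, equivariance of the parking-representative map from Proposition~\ref{prop:unique parking function} gives the restriction identity, and dividing the count of Proposition~\ref{prop:orbits of D} by $n$ gives the orbit formula, with the DT identification deferred to \cite{Rei12} in both treatments. The one place you go beyond the paper is the freeness of the induced $\mathbb{Z}_n$-action on $S_n$-orbits: the paper justifies the division by $n$ only by noting that each shift-equivalence class has size $n$, whereas your observation that a nontrivial stabilizer $\shift^j$ would force $\shift^j(\mathbf{b})=\sigma\mathbf{b}$ and hence two distinct break divisors in one shift-class (contradicting Proposition~\ref{prop:unique break divisor}) supplies the justification that the paper's phrasing glosses over.
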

\begin{proof}
  The claim $\breakd_{m,n}\cong_{S_n}\widehat{\p D}_{m,n}$  follows from the fact that the map $\phi$ in Proposition~\ref{prop:unique break divisor} is $S_n$-equivariant.

In arriving at Proposition~\ref{prop:orbits of D}, we accounted for the $S_n$-action. To count $S_n\times\mathbb{Z}_n$-orbits on ${\p D}_{m,n}$, we observe that each shift-equivalence class has size $n$, so we only need to divide the expression in Proposition~\ref{prop:orbits of D} by $n$. The fact that the resulting expression is a DT-invariant follows by comparing to \cite[Theorem 3.2]{Rei12}.\footnote{There is a minor typo in the expression in \emph{loc. cit.}; the $n$ in the binomial coefficient should be replaced by a $d$.}
This establishes the second claim.

  For the third claim, consider $S_{n-1}$ naturally as subgroup of  $S_n$ consisting of permutations that have $n$ as a fixed point.
  Let $(a_1,\dots,a_{n-1})\in \park_{m,n}$, and consider the unique element $\mathbf{a}\in {\p D}_{m,n}$ such that $\pi(\mathbf{a})=(a_1,\dots,a_{n-1})$. Then we know that there is a unique break divisor $\mathbf{b}$ shift-equivalent to $\mathbf{a}$. Now let $\sigma\in S_{n-1}$ act by permuting the first $n-1$ coordinates.
  Since shifts commute with the $S_n$-action on ${\p D}_{m,n}$, we have that $\sigma\cdot \mathbf{b}$ is shift-equivalent to $\sigma\cdot \mathbf{a}$.
  We conclude that $\mathrm{Res}_{S_{n-1}}^{S_n}(\breakd_{m,n})=\park_{m,n}$.
   \end{proof}

\begin{example}
\emph{
Consider $m=2$ and $n=4$. The orbits of the $S_n$ action on $\breakd_{m,n}$ are indexed by partitions $\lambda=(\lambda_1,\dots,\lambda_4)\vdash 9$ dominated by $(5,3,1,0)$. It is easily checked that there are $10$ such partitions. Let us check that this count matches the value of $\mathrm{DT}^{m+1}_{n}$.
The sum in Theorem~\ref{thm:frob_permutahedron} becomes
\[
\frac{1}{16}\left(\binom{11}{8}-\binom{5}{4}\right)=\frac{1}{16}\left(165-5\right)=10.
\]
}
\end{example}
We let $\chi_{m,n}$ denote the character of $\breakd_{m,n}$. 
The following corollary tells us that $\chi_{m,n}$  has a succinct description, which we do not know how to arrive at without the isomorphism $\breakd_{m,n}\cong_{S_n}\widehat{\p D}_{m,n}$.
The proof of this result follows the exact same route as laid out in the proof of \cite[Theorem 3.1]{KT21}, and we direct the reader to \emph{loc.\ cit.} for more details.

\begin{corollary}\label{cor:lattice_points_fixed}
Let $\sigma\in S_n$ have cycle type $\lambda=(\lambda_1,\dots,\lambda_{\ell})$ where $\lambda_{\ell}>0$.
Let $d\coloneqq \GCD(\lambda_1,\dots,\lambda_{\ell})$.
Then the number of break divisors in $\breakd_{m,n}$ fixed by $\sigma$, i.e. $\chi_{m,n}(\sigma)$, is given by
\begin{align*}
\chi_{m,n}(\sigma)=\left\lbrace\begin{array}{ll}
m^{\ell-1}n^{\ell-2} & d=1,\\
2m^{\ell-1}n^{\ell-2} & d=2, m \text{ odd, and } n=2 \: (\md 4),\\
0 & \text{otherwise.}
\end{array}\right.
\end{align*}
\end{corollary}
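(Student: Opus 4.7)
The plan is to use the isomorphism $\breakd_{m,n} \cong_{S_n} \widehat{\p D}_{m,n}$ from Theorem~\ref{thm:frob_permutahedron} and count $\sigma$-fixed shift-classes directly on $\widehat{\p D}_{m,n}$. A class $[\mathbf{x}]$ is fixed iff $\sigma\mathbf{x} = \shift^j\mathbf{x}$ for some $j \in \{0,\dots,n-1\}$; since $\shift^k\mathbf{x} = \mathbf{x}$ forces $km \equiv 0 \pmod{mn}$, i.e.\ $n\mid k$, the shift action on $\p D_{m,n}$ is free, and setting $F_j := \{\mathbf{x} \in \p D_{m,n} : \sigma\mathbf{x} = \shift^j\mathbf{x}\}$ one obtains
\[
\chi_{m,n}(\sigma) \;=\; \frac{1}{n}\sum_{j=0}^{n-1} |F_j|.
\]

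The condition defining $F_j$ says that along each cycle of $\sigma$ the entries of $\mathbf{x}$ form an arithmetic progression modulo $N := mn$ with common difference $jm$. Cyclic consistency along a cycle of length $\lambda_k$ forces $\lambda_k j \equiv 0 \pmod n$ for every $k$, so the admissible values of $j$ are exactly the $d$ multiples of $n/d$ in $\mathbb{Z}/n\mathbb{Z}$. Parametrizing $\mathbf{x}$ by one basepoint value $y_k \in \mathbb{Z}/N\mathbb{Z}$ per cycle, the degree condition $\sum_i x_i \equiv g_{m,n} \pmod N$ becomes the single linear congruence
\[
\sum_{k=1}^\ell \lambda_k y_k \;\equiv\; g_{m,n} + jm S \pmod{N}, \qquad S := \sum_{k=1}^\ell \binom{\lambda_k}{2}.
\]
Since $d \mid n$, we have $\gcd(\lambda_1,\dots,\lambda_\ell,N) = d$, so this congruence has either $d\,N^{\ell-1}$ solutions (when $d \mid g_{m,n} + jmS$) or none. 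Writing $j = ke$ with $e := n/d$ and collecting terms yields
\[
\chi_{m,n}(\sigma) \;=\; d\,m^{\ell-1}n^{\ell-2} \cdot \#\bigl\{k \in \{0,\dots,d-1\} : d \mid g_{m,n} + kemS\bigr\}.
\]

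The main technical point is the arithmetic analysis of this divisibility condition. When $d$ is odd, a direct calculation gives $S \equiv 0 \pmod d$ and $g_{m,n} \equiv 1 \pmod d$, so the condition forces $d = 1$ with $k = 0$, recovering the top case. When $d$ is even, writing $\lambda_k = d\mu_k$ yields $\binom{n}{2} \equiv -(d/2)e \pmod d$ and $S \equiv -(d/2)e \pmod d$, hence $g_{m,n} \equiv 1 - m(d/2)e \pmod d$; the condition $d \mid g_{m,n} + kemS$ rearranges to $(d/2)\, me(1+ke) \equiv 1 \pmod d$. Since $(d/2)x \bmod d$ is either $0$ or $d/2$, this is solvable only for $d = 2$ with $m$ and $e$ both odd and $k$ even, giving $k = 0$ as the unique valid choice. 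Translating $e$ odd back to $n \equiv 2 \pmod 4$ yields the middle case, and all other configurations give no valid $k$, whence $\chi_{m,n}(\sigma) = 0$. The overall skeleton parallels the argument for \cite[Theorem 3.1]{KT21}, with the parameter $m$ tracked through $N = mn$ and through the common difference $jm$ of the cycle-progressions.
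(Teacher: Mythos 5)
Your argument is correct and follows essentially the same route the paper intends: the paper's proof is just a pointer to \cite[Theorem 3.1]{KT21}, whose method (transfer to $\widehat{\p D}_{m,n}$, average over shift powers, reduce to a linear congruence per admissible $j$, then case on the parity of $d$) is exactly what you carry out, with the parameter $m$ correctly threaded through $N=mn$ and the common difference $jm$. The only cosmetic discrepancy is a sign in the congruence $\sum_k \lambda_k y_k \equiv g_{m,n} \pm jmS$, which is harmless since the set of admissible $j$ is closed under negation, so the solvability count is unaffected.
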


\section*{Acknowledgements}
V.T. is extremely grateful to Chi Ho Yuen for enlightening email correspondence.

\bibliographystyle{alpha}
\bibliography{Biblio_PS}

\begin{thebibliography}{ABKS14}

\bibitem[ABKS14]{ABKS14}
Yang An, Matthew Baker, Greg Kuperberg, and Farbod Shokrieh.
\newblock Canonical representatives for divisor classes on tropical curves and
  the matrix-tree theorem.
\newblock {\em Forum Math. Sigma}, 2:e24, 25, 2014.

\bibitem[BN07]{BN07}
Matthew Baker and Serguei Norine.
\newblock Riemann-{R}och and {A}bel-{J}acobi theory on a finite graph.
\newblock {\em Adv. Math.}, 215(2):766--788, 2007.

\bibitem[BR14]{BR14}
Andrew Berget and Brendon Rhoades.
\newblock Extending the parking space.
\newblock {\em J. Combin. Theory Ser. A}, 123:43--56, 2014.

\bibitem[Efi12]{Efi12}
Alexander~I. Efimov.
\newblock Cohomological {H}all algebra of a symmetric quiver.
\newblock {\em Compos. Math.}, 148(4):1133--1146, 2012.

\bibitem[GH16]{GH16}
Petar Gaydarov and Sam Hopkins.
\newblock Parking functions and tree inversions revisited.
\newblock {\em Adv. in Appl. Math.}, 80:151--179, 2016.

\bibitem[Hai94]{Hai94}
Mark~D. Haiman.
\newblock Conjectures on the quotient ring by diagonal invariants.
\newblock {\em J. Algebraic Combin.}, 3(1):17--76, 1994.

\bibitem[HW08]{HW08}
G.~H. Hardy and E.~M. Wright.
\newblock {\em An introduction to the theory of numbers}.
\newblock Oxford University Press, Oxford, sixth edition, 2008.
\newblock Revised by D. R. Heath-Brown and J. H. Silverman, With a foreword by
  Andrew Wiles.

\bibitem[KS11]{KS11}
Maxim Kontsevich and Yan Soibelman.
\newblock Cohomological {H}all algebra, exponential {H}odge structures and
  motivic {D}onaldson-{T}homas invariants.
\newblock {\em Commun. Number Theory Phys.}, 5(2):231--352, 2011.

\bibitem[KST21]{KST21}
Matja\v{z} Konvalinka, Robin Sulzgruber, and Vasu Tewari.
\newblock Trimming the permutahedron to extend the parking space.
\newblock {\em Algebr. Comb.}, 4(4):663--674, 2021.

\bibitem[KT21]{KT21}
Matja\v{z} Konvalinka and Vasu Tewari.
\newblock Some natural extensions of the parking space.
\newblock {\em J. Combin. Theory Ser. A}, 180:105394, 2021.

\bibitem[MZ08]{MZ08}
Grigory Mikhalkin and Ilia Zharkov.
\newblock Tropical curves, their {J}acobians and theta functions.
\newblock In {\em Curves and abelian varieties}, volume 465 of {\em Contemp.
  Math.}, pages 203--230. Amer. Math. Soc., Providence, RI, 2008.

\bibitem[Pos09]{Pos09}
A.~Postnikov.
\newblock Permutohedra, associahedra, and beyond.
\newblock {\em Int. Math. Res. Not. IMRN}, (6):1026--1106, 2009.

\bibitem[PS04]{Pos04}
A.~Postnikov and B.~Shapiro.
\newblock Trees, parking functions, syzygies, and deformations of monomial
  ideals.
\newblock {\em Trans. Amer. Math. Soc.}, 356(8):3109--3142, 2004.

\bibitem[Rad52]{Rad52}
R.~Rado.
\newblock An inequality.
\newblock {\em J. London Math. Soc.}, 27:1--6, 1952.

\bibitem[Ram44]{Ra44}
K.~G. Ramanathan.
\newblock Some applications of {R}amanujan's trigonometrical sum {$C_m(n)$}.
\newblock {\em Proc. Indian Acad. Sci., Sect. A.}, 20:62--69, 1944.

\bibitem[Rei11]{Rei11}
Markus Reineke.
\newblock Cohomology of quiver moduli, functional equations, and integrality of
  {D}onaldson-{T}homas type invariants.
\newblock {\em Compos. Math.}, 147(3):943--964, 2011.

\bibitem[Rei12]{Rei12}
Markus Reineke.
\newblock Degenerate cohomological {H}all algebra and quantized
  {D}onaldson-{T}homas invariants for {$m$}-loop quivers.
\newblock {\em Doc. Math.}, 17:1--22, 2012.

\bibitem[Sta99]{St99}
R.~P. Stanley.
\newblock {\em Enumerative combinatorics. {V}ol. 2}, volume~62 of {\em
  Cambridge Studies in Advanced Mathematics}.
\newblock Cambridge University Press, Cambridge, 1999.
\newblock With a foreword by Gian-Carlo Rota and appendix 1 by Sergey Fomin.

\bibitem[Yue17]{Yu17}
Chi~Ho Yuen.
\newblock Geometric bijections between spanning trees and break divisors.
\newblock {\em J. Combin. Theory Ser. A}, 152:159--189, 2017.

\end{thebibliography}

\end{document}